\newtheorem*{theorem}{Theorem}
\title{\bf 
On the plane and its coloring
}
\author{\bf 
\textcolor[rgb]{0.8,0,1}{Jaan Parts} \\
} 
\date{\normalsize \textcolor[rgb]{0.8,0,1}{Kazan, Russia, jaan\_parts@.mail.ru}}
\begin{document}

\maketitle

\pagestyle{empty}
\thispagestyle{empty}

\begin{abstract}
We provide a human-verifiable proof that, in a certain sense, the chromatic number of the plane is exactly 7.
\end{abstract}

\section{Introduction}

It is generally accepted that the rainbow contains seven colors. Although in the traditions of different nations, the sets of these colors are different. According to the legend, Isaac Newton identified five primary colors: red, yellow, green, blue and violet. Later, he added two more: orange and indigo, as he believed that the number of primary colors corresponds to the number of primary notes in an octave. There is no indigo in Russian, but there is a word for light blue. But according to the number of primary colors, there seems to be agreement.
Following Newton, we want to see a similar color harmony for the plane.

The original title 
was "The chromatic number of the plane is 7 $-$ a human-verifiable proof". But my dear editors dissuaded me from using it.

On the one hand, there is no mistake here: we will indeed be talking about the \textit{exact} 
number of colors.
Recall that the \textit{chromatic number of the plane} (denoted $\chi$) is the minimum number of sets into which the Euclidean plane can be partitioned such that the distance between any two points taken from the same set is not equal to some \textit{forbidden distance} $f$, which is usually taken $f=1$. 
It is customary to assign a \textit{color} to each such set (along with an ordinal number) and talk about a \textit{coloring} of the plane. Any coloring with a forbidden distance is called \textit{proper}.

It is known that 7 colors are enough for a proper coloring of the plane. This is easy to demonstrate by \textit{tiling} a plane in the form of a honeycomb using regular hexagons. Each such hexagon takes one of seven colors and has a diameter slightly less than one. This construction, whose authorship is attributed to John Isbell \cite{soi}, gives an estimate $\chi\le 7$ at once for the entire \textit{interval} of forbidden distances $1\le f\le\sqrt7/2\approx 1.322876$.

To obtain a \textit{lower bound} of $\chi$, a (finite) \textit{graph} is usually constructed with the corresponding value of $\chi$, but other considerations are possible. By the way, we are not the first to announce the proof of the exact value of $\chi$. For example, Kai-Rui Wang proved \cite{wang} that $\chi=4$. True, this was before Aubrey de Grey showed \cite{grey} that $\chi\ge 5$. Alexander Soifer conjectured \cite{soi} that $\chi=7$, which is the value we will focus on.

On the other hand, since this article was being prepared for the April issue of the journal, we considered it possible to remain silent (temporarily) that we will be talking about the chromatic number of a plane with an interval of forbidden distances $1\le f\le d$ \:\: $(d>1)$,
moreover, about a specific set of such intervals. 
We hasten to assure the reader that everything below will be honest, without deception.

Some forbidden distance intervals 
have one surprising property: there are values of $d$ for which $\chi$ is known exactly. This fact was discovered by Geoffrey Exoo \cite{exoo} who constructed a 203-vertex 7-chromatic graph with 
$d=\sqrt{43/25}\approx 1.311488<\sqrt7/2$ . If we recall the whole history of colorings \cite{soi}, in most generalizations of the plane chromatic number problem, the gap between the known lower and upper bounds of $\chi$ only gets larger, so the existence of such islands of exact knowledge is like a miracle.

Recently, Joanna Chybowska-Sokół, Konstanty Junosza-Szaniawski, and Krzysztof W\k{e}sek considered a graph of a different construction \cite{wes}, which (among other things) allowed them to get $d\approx 1.285987$.

Here we go a little further and, for $d=2\sin(2\pi/9)\approx 1.285575$, 
offer a simple constructive proof that can be verified without using a computer.

\section{Polychromatic vertices}

In some cases, instead of the forbidden interval of the asymmetric form $1\le f\le d$, it is more convenient to consider the symmetric interval $1-\varepsilon \le f\le 1+\varepsilon$. 
These forms are interchangeable, 
$d=(1+\varepsilon)/(1-\varepsilon)$.

The transition from one forbidden distance ($\varepsilon=0$) to a non-zero interval of forbidden distances ($\varepsilon>0$) significantly affects coloring. For example, two colors are no longer enough to color a straight line $\mathbb{R}^1$.

To show that the chromatic number of the plane $\mathbb{R}^2$ with $\varepsilon>0$ is not less than 6, only three vertices of a regular unit triangle are sufficient. Indeed, for any fixed proper coloring of the plane, there is a point \textit{A} whose $\varepsilon$-neighbourhood contains three colors 1, 2, 3. On a circle of unit radius centered at point \textit{A}, there is a point \textit{B} whose $\varepsilon$-neighborhood contains two other colors 4, 5. If we place two vertices of the triangle at points \textit{A} and \textit{B} (let's call them \textit{tri-} and \textit{bi-chromatic}, respectively), the remaining vertex \textit{C} will have color 6.
It remains to prove the existence of tri- and bi-chromatic vertices for any proper coloring of the plane, which is not very difficult (see lemma 3.1 in \cite{wes}).

Note that this construction can be generalized to an Euclidean space $\mathbb{R}^n$ of arbitrary dimension 
$n\in\mathbb{Z}_{\ge 0}$
: for any proper coloring, there is such an $(n+1)$-vertex regular unit simplex, whose vertices will occupy sequentially $\{n+1,\; n,\; n-1,\, \dots, 1\}$ colors. In other words, the chromatic number of $\mathbb{R}^n$ with $\varepsilon>0$ is at least ${n+2 \choose 2}=(n+1)(n+2)/2$.

\section{Main construction}

We will rely on the construction proposed in \cite{wes}. Namely, a 4-chromatic graph is constructed, all vertices of which are located in the annulus between circles with radii 1 and $d$ and are connected by edges of length from 1 to $d$ for some $d\ge 1$. An additional central vertex is connected by edges to all annulus vertices and is located on a properly colored plane so that its $\varepsilon$-neighborhood contains points of at least three colors. In other words, the central vertex is tri-chromatic, which results in a 7-chromatic graph.

In fact, we took the 2601-vertex graph from \cite{wes} (claim 3.3, construction 1), and reduced it a little bit. One circle of unit radius was enough for us, on which 18 vertices are evenly distributed, including one bi-chromatic vertex. The resulting 19-vertex graph (Fig.~\ref{g19}) is already 
suitable for use in the proof of the

\begin{figure}[!t]
\centering
\includegraphics[scale=0.22]{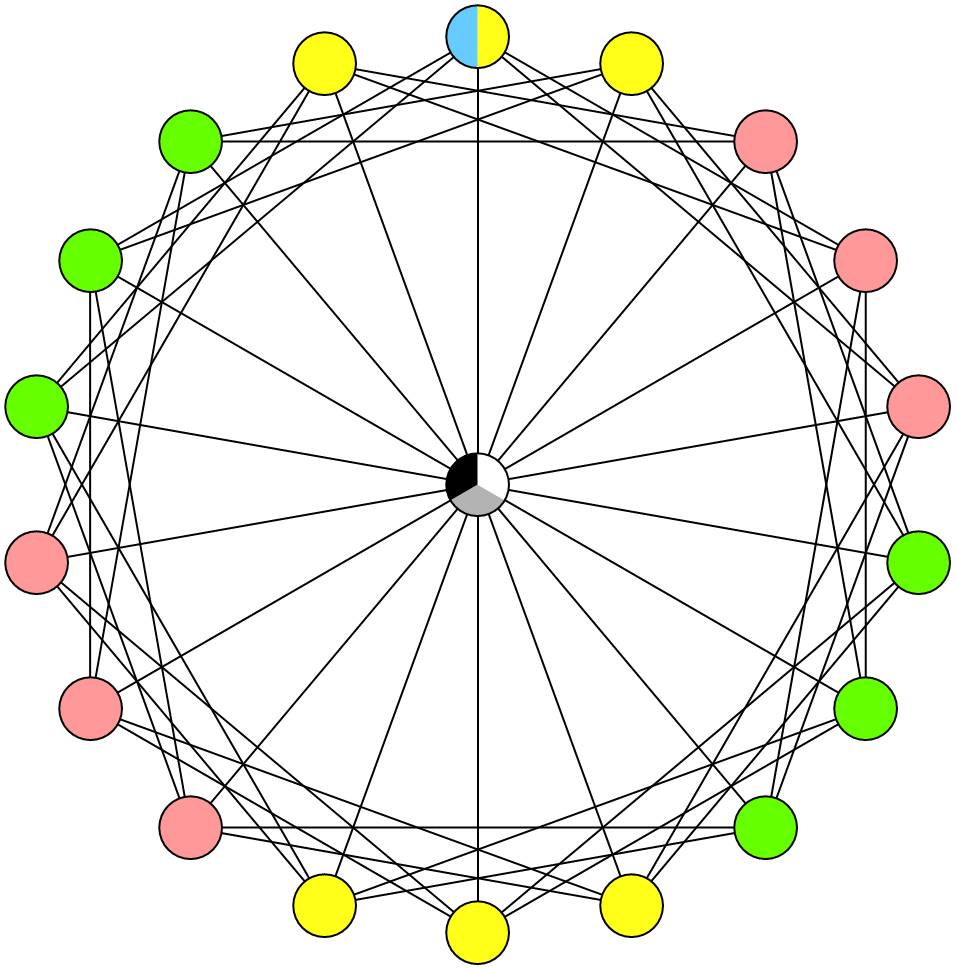}
\smallskip
\caption{A 19-vertex 7-chromatic graph with bi- and tri-chromatic vertices. A variant of the 7-coloring is shown.}
\label{g19}
\end{figure}

\begin{theorem}
{\upshape \cite{wes}, \cite{exoo}} 
The chromatic number of the plane with an interval of forbidden distances\, $[\,1,d\,]$\,
is 7\, for\, $d\in (\,2\sin(2\pi/9), \sqrt7/2\,]$.
\end{theorem}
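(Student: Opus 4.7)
The plan is to split the theorem into its two halves. The upper bound $\chi\le 7$ on $[1,d]$ for every $d\le\sqrt 7/2$ is already supplied by the Isbell hexagonal 7-coloring recalled in the introduction, which is proper simultaneously across the whole interval $[1,\sqrt 7/2]$. All the content of the theorem therefore lies in the lower bound $\chi\ge 7$ for every $d>2\sin(2\pi/9)$, which I intend to attack by contradiction from a hypothetical proper 6-coloring, realizing the 19-vertex graph of Figure~\ref{g19} in the plane.

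I would first invoke the polychromatic-vertex argument of Section~2 (Lemma~3.1 of \cite{wes}) to obtain a tri-chromatic point $A$ whose $\varepsilon$-neighborhood meets three colors --- say $\{1,2,3\}$ --- together with a bi-chromatic point $B$ on the unit circle $S$ about $A$ whose $\varepsilon$-neighborhood meets two further colors $\{4,5\}$. I then take $V_0=B,V_1,\ldots,V_{17}$ equispaced on $S$ at angular step $20^\circ$. The key geometric check is that among the possible chord lengths $2\sin(10^\circ k)$ for $k=1,\ldots,9$, only $k=3$ (length $1$) and $k=4$ (length $2\sin(2\pi/9)$) lie in $[1,d]$ once $d>2\sin(2\pi/9)$, while every other step yields a chord outside the range. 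So the $V_i$ form the circulant graph $C_{18}(3,4)$, and $A$ is adjacent to every $V_i$ at distance~$1$.

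Next I would extract the forced part of the coloring. Each $V_i$ sits at a forbidden distance from the three color witnesses near $A$, so $V_i\in\{4,5,6\}$. The four $V_i$ adjacent to $B$ --- namely $V_3,V_4,V_{14},V_{15}$ --- sit at a forbidden distance from the two witnesses near $B$ as well, so they must all take color $6$. Their nine further neighbors $V_0,V_1,V_6,V_7,V_8,V_{10},V_{11},V_{12},V_{17}$ are then confined to $\{4,5\}$, while the remaining five vertices $V_2,V_5,V_9,V_{13},V_{16}$ form a chord 5-cycle and may use any of the three admissible colors.

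The hard part, which is really just careful bookkeeping, is the final case analysis. The step-$3$ and step-$4$ chord edges restricted to the nine-set trace out a single simple path $V_6{-}V_{10}{-}V_7{-}V_{11}{-}V_8{-}V_{12}$ (plus three isolated vertices), and a path has exactly two proper $\{4,5\}$-colorings, complementary to each other. For either choice, $V_9$ sees two neighbors $V_6,V_{12}$ of opposite colors in $\{4,5\}$, forcing $V_9=6$; then $V_5$ and $V_{13}$ avoid their respective path-neighbors $V_8,V_{10}$ and are pushed back into $\{4,5\}$; and finally $V_2$ and $V_{16}$ are each forced to color $6$, contradicting the step-$4$ edge $V_2V_{16}$ of the 5-cycle. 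Hence no proper 6-coloring exists and $\chi\ge 7$. The one place that requires care is not to overlook any step-$3$ or step-$4$ chord between the nine-set and the chord 5-cycle during this propagation; the contradiction itself is elementary once the adjacencies are laid out.
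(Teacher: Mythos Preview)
Your argument is correct. The geometric setup and the adjacency check for $C_{18}(3,4)$ are right, and the propagation from the bi-chromatic vertex through $V_3,V_4,V_{14},V_{15}=6$, the path $V_6\text{--}V_{10}\text{--}V_7\text{--}V_{11}\text{--}V_8\text{--}V_{12}$ in $\{4,5\}$, then $V_9=6$, $V_5,V_{13}\in\{4,5\}$, and finally the clash $V_2=V_{16}=6$ on a step-$4$ edge, all go through exactly as you describe. One small point you gloss over (as does the paper): with the asymmetric interval $[1,d]$ and the ring at radius exactly $1$, the $\varepsilon$-witnesses near $A$ can sit at distance $1-\varepsilon<1$ from a ring vertex; this is fixed either by passing to the equivalent symmetric interval $[1-\varepsilon',1+\varepsilon']$ as in Section~2, or by placing the ring at radius $1+\varepsilon$.

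Your route, however, is \emph{not} the paper's main proof. The paper first analyses $C_{18}(3,4)$ \emph{without} using the bi-chromatic vertex: it shows that in any proper $3$-coloring the monochromatic runs have length $m\in\{2,3\}$ (ruling out the local patterns $123$ and $121$), then that runs of length $2$ and $3$ cannot coexist, so the only $3$-colorings are ``nine pairs'' or ``six triples''; finally it checks that in either of these two colorings every vertex sees both remaining colors, so a bi-chromatic vertex is impossible. Your approach instead fixes the bi-chromatic vertex first and propagates forward --- which is precisely the alternative proof the paper records in Section~4 as de~Grey's suggestion. The trade-off the paper notes applies here too: your version is shorter and needs fewer case checks, while the paper's version yields the extra structural information that $C_{18}(3,4)$ has exactly two proper $3$-colorings up to relabeling.
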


\begin{proof}
Consider the 19-vertex graph shown in Fig.~\ref{g19}. We temporarily discard the central vertex and renumber the remaining 18 vertices sequentially clockwise from 1 to 18. For convenience, we can arrange them in a row, as shown in Fig.~\ref{proof}. We will identify the vertices and their numbers (indices), taking into account that the indexing is done modulo 18. 

Since the 18-vertex graph contains an odd cycle $C_5$, at least three colors are required (numbered 1, 2, 3 in Fig.~\ref{proof}). Let $m$ denote the number of consecutive (from $i$ to $i+m-1$) vertices of the same color, bounded by two vertices ($i-1$ and $i+m$) of other colors. We will show successively that in any proper 3-coloring a) only pairs ($m=2$) and triplets ($m=3$) are allowed, b) they cannot occur simultaneously, c) a bi-chromatic vertex is not allowed.
Note that each vertex $i$ of the 18-vertex graph has degree 4 and is connected to vertices $i\pm 3$ and $i\pm 4$. Therefore, $m\le 3$, and the coloring rule applies: 1) two consecutive vertices $i$ and $i+1$ of two different colors give the remaining third color to the vertices $i-2$ and $i+3$.

To show $m\neq 1$, it suffices to make sure that the sequences of colors 123 and 121 are not allowed. To do this, we assign these colors to the three consecutive vertices, after which, based on the rule described above, we sequentially color some other vertices, until we come to a contradiction.

Case 123 is shown in Fig.~\ref{proof}\textit{a}. The vertices are colored in the following sequence (arrow means the colors are uniquely defined by specified vertices): let vertices 7, 8, 9 be colors 1, 2, 3;
then $\{7,8\}\Rightarrow \{4,11\}$; $\{ 8,9\}\Rightarrow \{5,12\}$; $\{4,5\}\Rightarrow 1$; $\{11,12\}\Rightarrow 15$; this leads to a conflict: vertices 1 and 15 
connected by an edge take the same color.

Case 121 is shown in Fig.~\ref{proof}\textit{b}: assign colors to $\{7,8,9\}$; then $\{7,8\}\Rightarrow 11$; and whatever color the vertex $10$ takes, we get three different colors of $\{8,9,10\}$ or $\{9,10,11\}$, which brings us to the case 123 considered earlier.

Thus, the colors necessarily occur either in pairs or in triplets; the vertex $i$, both nearest vertices of which $i\pm 1$ have other colors, is forbidden. In other words, the following coloring rules are added: 2) two consecutive vertices $i$ and $i+1$ of different colors give these colors to the vertices $i-1$ and $i+2$ respectively; 3) a pair or triple of vertices of the same color is bounded by vertices of two different colors (because of the edges connecting the vertices $i$ and $i+3$).

Let us show that pairs and triplets cannot alternate in any proper 3-coloring. To do this, consider the initial sequence of colors 331122233 (containing a pair and a triple), and continue coloring using the rules above (Fig.~\ref{proof}\textit{c}): specify colors of $\{5,6,\dots, 13\}$; then $\{6,7\}\Rightarrow 3$; $\{11,12\}\Rightarrow 15$; $\{3,15\}\Rightarrow 18$; $\{7,18\}\Rightarrow 4$; $\{4,5\}\Rightarrow 1$; which leads to a conflict in the coloring of the vertices 1 and 15. 

Thus, only two different proper 3-colorings are possible, containing either six triples or nine pairs. Now it is easy to check that any vertex is connected by edges to vertices of two different colors. Hence, at least four colors are required to color a 18-vertex graph with one bi-chromatic vertex. 
(In any of the two possible 3-colorings, there will be a sequence 1$x$223, where $x$ is 1 or 2, and the first vertex of this sequence cannot be changed to a bi-chromatic one.)

It remains to place in the center one tri-chromatic vertex connected by edges to all other vertices, which will result in a 7-chromatic 19-vertex graph shown in Fig.~\ref{g19}.
To complete the proof, note that this graph has edges of length 1 and $2\sin(2\pi/9)$, the latter gives the lower bound on $d$, while the upper bound $\sqrt7/2$ is given by Isbell's tessellation.

\end{proof}

\begin{figure}[!b]
\centering
\includegraphics[scale=0.275]{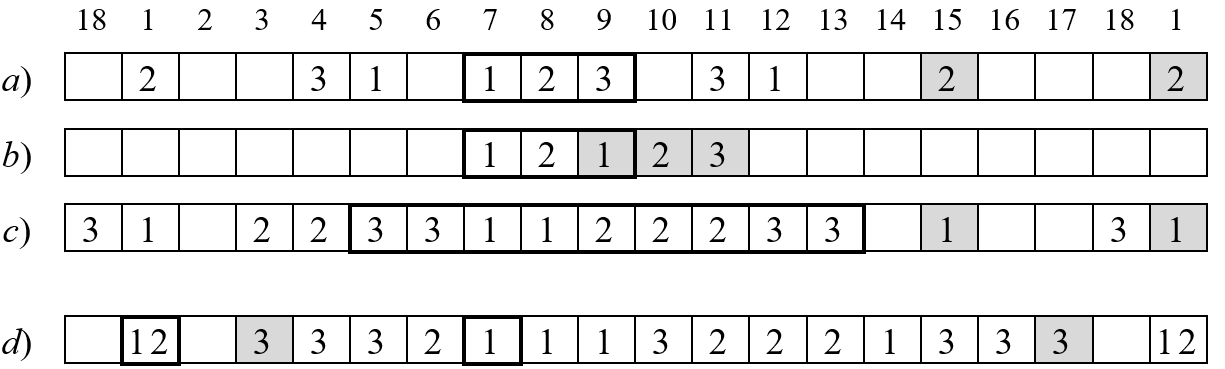}
\smallskip
\caption{To the proof of the Theorem. The vertex numbers are shown at the top, then below are the specific cases of coloring. The initial color assignments are bordered. Conflicts are shown in gray.}
\label{proof}
\end{figure}


\section{Additions
}

Aubrey de Grey suggested starting the coloring from a bi-chromatic vertex, which simplifies the proof (see Fig.~\ref{proof}\textit{d}).
Let vertex 1 of 18-vertex graph be colors 1 and 2. Then, in any proper 3-coloring vertices 4, 5, 15 and 16 must be color 3, so vertices 7, 8, 9, 11, 12 and 13 cannot be color 3. Then, without loss of generality let vertex 7 be color 1. 
Then $\{7,15\}\Rightarrow 11$, $\{5,11\}\Rightarrow 8$, $\{8,15\}\Rightarrow 12$, $\{5,12\}\Rightarrow 9$, $\{9,16\}\Rightarrow 13$, $\{7,13\}\Rightarrow 10$, $\{9,10\}\Rightarrow 6$, $\{10,11\}\Rightarrow 14$, $\{6,7\}\Rightarrow 3$, $\{13,14\}\Rightarrow 17$, so vertices 3 and 17 are both color 3, contradiction.

Who can do better?

However, it all depends on the criteria. The second version is more straightforward, requires less checks, looks more compact. The first version probably allows a better understanding of what is happening (since it gives all colorings without a bi-chromatic vertex, which are already limited) and requires fewer coloring operations (if we don’t count initial color assignments).

It can be seen that in second version of the proof, two vertices are redundant. 
If vertex 1 is bi-chromatic, then one of the following pairs of vertices can be removed: $\{2,18\}$, $\{2,6\}$, $\{8,12\}$, or $\{14,18\}$. In the first version of the proof, four vertices are not involved in the coloring.

Aubrey also suggested using several bi-chromatic vertices instead of one. Thus, one can try to reduce the number of vertices involved in the proof. Recall that a bi-chromatic vertex appears at the boundary of two regions of different colors. Apparently, at least 9 such vertices are required. Indeed, if we take a circle with a radius slightly greater than one, then for its 3-coloring with a forbidden unit distance, at least 7 segments are required. But if there are 7 or 8 segments, then one of them will border on two segments of the same color, and as a result, there will be a pair of points of the same color at a unit distance from each other. Therefore, at least 9 segments are required, which means 9 bi-chromatic vertices.

One can do without a bi-chromatic vertex, for example, by considering a 29-vertex graph shown in Fig.~\ref{g29} with edge length between 1 and $d=2\sin(5\pi/22)\approx 1.309721$. Here all vertices, except for the central one, occupy two concentric circles of radii 1 and $d$.
But the proof with the 19-vertex graph seems simpler, and $d$ turns out to be smaller.

\begin{figure}[!t]
\centering
\includegraphics[scale=0.22]{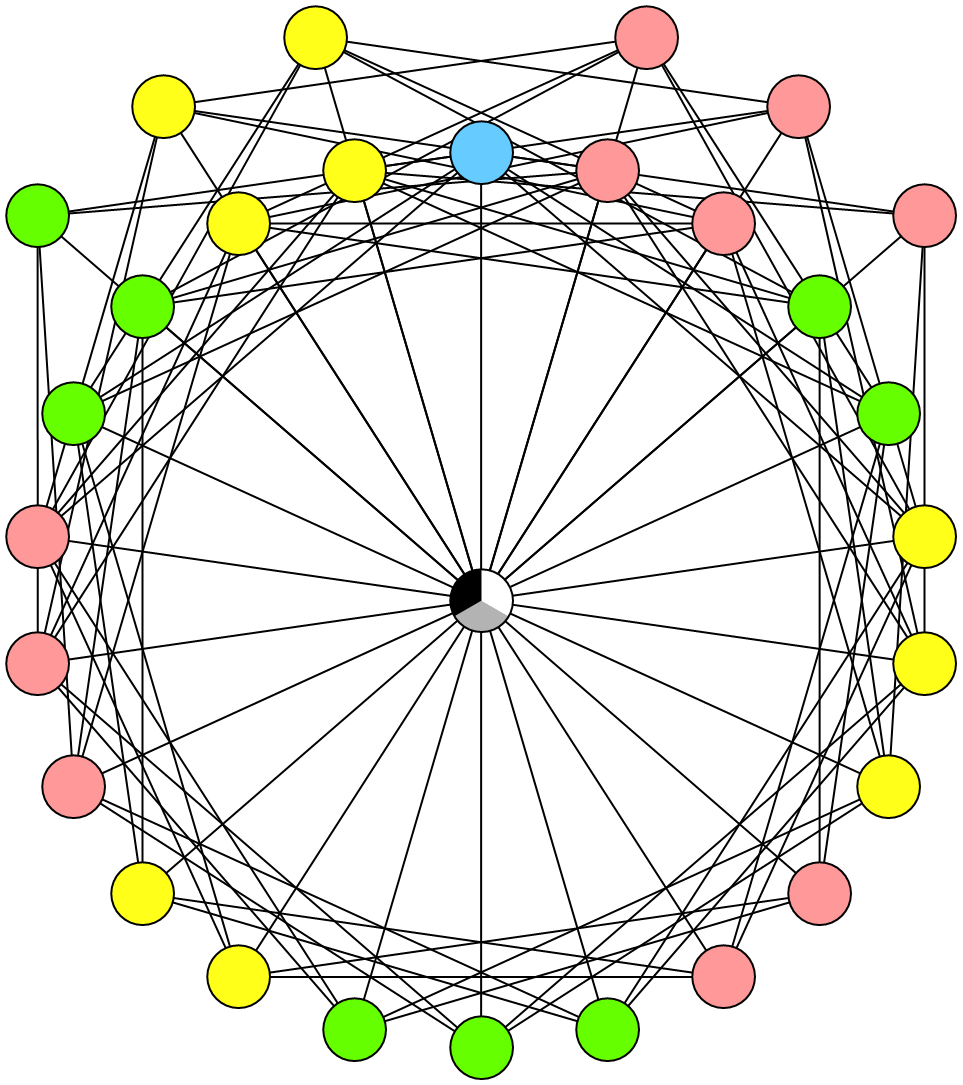}
\smallskip
\caption{A 29-vertex 7-chromatic graph with tri-chromatic vertex.}
\label{g29}
\end{figure}

It would be good to go further with a decrease in $d$, but apparently this cannot be done within the framework of such constructions. The value of $d$ is limited by the existence of a three-color tiling \cite{wes} of an annulus with an outer radius $2\sin(2\pi/9)$.

I thank Alexander Soifer and Aubrey de Grey, who found an error in every second statement. But it's better to check everything anyway.

\end{document}